  \theoremstyle{plain}
  \newtheorem*{thm*}{\protect\theoremname}
\theoremstyle{plain}
\newtheorem{thm}{\protect\theoremname}
  \theoremstyle{plain}
  \newtheorem{lem}[thm]{\protect\lemmaname}
\gdef\SetFigFontNFSS#1#2#3#4#5{} 
\gdef\SetFigFont#1#2#3#4#5{} 
\def\clap#1{\hbox to 0pt{\hss#1\hss}}
\DeclareMathOperator{\codim}{codim}
\definecolor{myblue}{rgb}{0.09,0.32,0.44} 
\theoremstyle{remark}
\newtheorem*{qst*}{Question}
\newtheorem*{rmrks*}{Remarks}
\newlength{\tempindent} 
\newcommand{\lazyenum}{
\setlength{\tempindent}{\parindent} 
\begin{enumerate}[leftmargin=0cm,itemindent=0.7cm,labelwidth=\itemindent,labelsep=0cm,align=left,label=\arabic*)]
\setlength{\parskip}{\smallskipamount}
\setlength{\parindent}{\tempindent}
}
\renewcommand{\andify}{%
  \nxandlist{\unskip, }{\unskip{} \@@and~}{\unskip{} \@@and~}}
\def\author@andify{%
  \nxandlist {\unskip ,\penalty-1 \space\ignorespaces}%
    {\unskip {} \@@and~}%
    {\unskip \penalty-2 \space \@@and~}%
}
\let\@wraptoccontribs\wraptoccontribs
\def\afs#1#2{\href{#1}{\nolinkurl{#2}}}
\def\afs#1#2{\burlalt{#1}{#2}}
  \providecommand{\lemmaname}{Lemma}
  \providecommand{\theoremname}{Theorem}
\providecommand{\theoremname}{Theorem}
\begin{document}

\title[A random determinant is not a square]{Supplement to the article ``irreducible polynomials with bounded
height''}

\author{Lior Bary-Soroker and Gady Kozma}

\address{Raymond and Beverly Sackler School of Mathematical Sciences, Tel
Aviv University, Tel Aviv 69978, Israel.}

\email{barylior@post.tau.ac.il}

\address{Department of Mathematics, The Weizmann Institute of Science, Rehovot
76100, Israel.}

\email{gady.kozma@weizmann.ac.il}

\maketitle
In this paper we prove that the determinant of a random matrix is
unlikely to be a square. Formally,
\begin{thm*}
\label{thm:deter}Let $M$ be an $n\times n$ matrix with i.i.d.\ entries
taking the value 0 with probability $\frac{1}{2}$ and the values
$1$ and $-1$ with probability $\frac{1}{4}$ each. Then 
\[
\lim_{n\to\infty}\mathbb{P}(\exists k\in\mathbb{Z}\text{ such that }\det M=k^{2})=0.
\]
\end{thm*}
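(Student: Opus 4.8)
The plan is to bound the probability that $\det M$ is a square by a ``local'' obstruction at several primes and then let the number of primes grow. Fix odd primes $p_1,\dots,p_m$. If $\det M=k^2$ then for each $i$ the $p_i$-adic valuation of $\det M$ is even; in particular either $p_i\nmid\det M$ and $\det M$ is a quadratic residue mod $p_i$, or $p_i^2\mid\det M$. Writing $\chi_i$ for the Legendre symbol mod $p_i$ (with $\chi_i(0)=0$), this yields the pointwise bound $\mathbbm{1}[\det M=k^2\text{ for some }k]\le\prod_{i=1}^m\bigl(\tfrac12(1+\chi_i(\det M))+\mathbbm{1}[p_i^2\mid\det M]\bigr)$, so that
\[
\mathbb{P}(\exists k:\det M=k^2)\le\mathbb{E}\prod_{i=1}^m\Bigl(\tfrac12\bigl(1+\chi_i(\det M)\bigr)+\mathbbm{1}[p_i^2\mid\det M]\Bigr).
\]
Expanding the product, the ``main'' terms involve only the divisibility indicators and contribute $2^{-m}\sum_{S\subseteq\{1,\dots,m\}}2^{|S|}\,\mathbb{P}\bigl((\textstyle\prod_{i\in S}p_i)^2\mid\det M\bigr)$, while every ``cross'' term carries a nontrivial real character $\chi_B=\prod_{i\in B}\chi_i$ for some nonempty $B$.

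The analytic engine is that these character sums vanish: for every fixed nontrivial quadratic character $\chi$ modulo a squarefree odd $N'$ one should have $\mathbb{E}[\chi(\det M)]\to0$, and more generally $\mathbb{E}[\chi_B(\det M)\,\mathbbm{1}[\ell^2\mid\det M]]\to0$ when $B\ne\emptyset$. To prove this I would condition on the first $n-1$ rows $R$ and expand along the last row: $\det M=\sum_{j=1}^n A_j M_{nj}$, where the signed maximal minors $A_1,\dots,A_n$ depend only on $R$ and furnish (up to sign) a generator of the kernel of $R$, while $M_{n1},\dots,M_{nn}$ are i.i.d.\ and independent of $R$. Conditionally $\det M$ is thus a linear form in independent $0,\pm1$ variables, and for a nonzero frequency $a$ modulo $\ell^2$ its conditional characteristic function is $\prod_j\bigl(\tfrac12+\tfrac12\cos(2\pi aA_j/\ell^2)\bigr)$; each such factor is at most $\cos^2(\pi/\ell^2)<1$ once $A_j$ is coprime to whichever prime $p\mid\ell$ has $a\not\equiv0\pmod{p^2}$. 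Hence if, for every prime $p\mid\ell$, at least $cn$ of the $A_j$ are nonzero mod $p$, then $\det M$ is within $o(1)$ of uniform modulo $\ell^2$, so a nontrivial character averages to $0$ and a divisibility indicator averages to $\ell^{-2}$. The input needed is a \emph{weight lemma}: the kernel vector of a random $(n-1)\times n$ matrix over $\mathbb{F}_p$ has at least $cn$ nonzero coordinates with probability $1-o(1)$. I would prove this by anticoncentration --- a random $0,\pm1$ vector lies in a fixed subspace of codimension $d$ with probability at most $2^{-d}$, since each coordinate hits any prescribed value with probability $\le\tfrac12$ --- noting that a kernel vector supported on a set of size $s$ forces those $s$ columns to be dependent, an event of probability $\le2^{-(n-s)}$, and union-bounding over the $\binom{n}{s}$ supports for $s<cn$ with $c$ small.

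Assembling, every cross term tends to $0$: on the event that $R$ has full rank modulo each prime of $B$ the conditional average of $\chi_B(\det M)$ is $o(1)$ by the near-uniformity above, while on the complementary rank-deficient event $\det M\equiv0$ modulo some prime of $B$, so that $\chi_B(\det M)=0$ \emph{exactly}; this is precisely why multiplicative characters are used, since the value $0$ --- itself a square --- is invisible to $\chi$ (the residual rank-deficient contributions sit inside the same $\sum_p p^{-2}$ budget appearing below). The main terms give $2^{-m}\sum_S2^{|S|}\mathbb{P}((\prod_{i\in S}p_i)^2\mid\det M)$, and here the even-valuation refinement pays off: the conditional-uniformity bound yields $\limsup_n\mathbb{P}(\ell^2\mid\det M)\le C\ell^{-2}$, whence the sum is at most $2^{-m}\prod_i\bigl(1+O(p_i^{-2})\bigr)\le 2^{-m}\exp\bigl(O(\sum_i p_i^{-2})\bigr)$, which tends to $0$ as $m\to\infty$ because $\sum_p p^{-2}<\infty$. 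Letting first $n\to\infty$ and then $m\to\infty$ would complete the proof.

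I expect the main obstacle to be the genuinely \emph{joint} behaviour of $\det M$ modulo several primes. Because near-singular matrices occur with probability bounded away from both $0$ and $1/p$, the law of $\det M$ mod $p$ is not asymptotically uniform --- it keeps a positive atom at $0$ --- so one cannot simply multiply the single-prime estimates, each of which only yields $\limsup_n\mathbb{P}(\det M=\square)\le\tfrac12$. The two devices that rescue the argument are the passage from ``$p\mid\det M$'' to ``$p^2\mid\det M$'' for squares, which renders the divisibility contributions summable over $p$, and the identity $\chi(0)=0$, which neutralises exactly the rank-deficient events that obstruct equidistribution. Making the estimate $\mathbb{P}(\ell^2\mid\det M)\lesssim\ell^{-2}$ hold uniformly while controlling these rank-deficient contributions is, I believe, the delicate technical core.
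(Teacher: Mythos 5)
Your route is genuinely different from the paper's. The paper argues ``archimedeanly'': it combines the Kahn--Koml\'os--Szemer\'edi/Bourgain--Vu--Wood anticoncentration bound $\mathbb{P}(\det M=x)\le 2^{-\delta n}$ with a combinatorial $2$-isolation lemma for the last $k$ entries of the first row, and controls the exceptional pairs via a divisor-counting estimate. You instead detect squares locally, through quadratic characters and second-power divisibility at $m$ auxiliary primes. Much of your sketch is sound: the pointwise majorant is valid (each factor is $\ge 1$ on the square event and $\ge 0$ always, including at $\det M=0$); the conditional characteristic function of $\sum_j A_jM_{nj}$ is indeed $\prod_j\bigl(\tfrac12+\tfrac12\cos(2\pi aA_j/\ell^2)\bigr)$; and your weight lemma, proved by the Odlyzko-type bound $2^{-\codim}$ together with a union over small supports, does give --- uniformly in $p$ --- that with probability $1-e^{-cn}$ the cofactor vector has $\ge cn$ coordinates nonzero mod $p$ whenever the first $n-1$ rows have full rank mod $p$. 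On that event $\det M$ equidistributes modulo $\prod p_i^2$, the cross terms vanish, and the order of limits ($n\to\infty$ first, with the primes fixed, then $m\to\infty$) is the right one.

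The genuine gap is the rank-deficient event, which you acknowledge but which none of your lemmas controls. You need $\mathbb{P}\bigl(\operatorname{rank}_{\mathbb{F}_p}(R)\le n-2\bigr)=O(p^{-2})$ (at least for each fixed $p$ as $n\to\infty$): first, because corank $\ge 2$ over $\mathbb{F}_p$ already forces $p^2\mid\det M$, so even the single-prime estimate $\mathbb{P}(p^2\mid\det M)\le Cp^{-2}$ is \emph{equivalent} to such a corank bound; and second, because for the divisibility terms (primes in $S$) the identity $\chi(0)=0$ does not rescue you --- on rank deficiency you can only bound the integrand by $1$, and this loss enters additively into each of the roughly $2^m$ terms of the expansion, so it must decay in $p$ to be beaten by choosing the primes large relative to $m$. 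Your Odlyzko-type argument gives only $\mathbb{P}(\text{some row lies in the span of the others})\le\sum_j 2^{-(n-j+1)}=O(1)$, with no decay in $p$ whatsoever; obtaining the $p^{-2}$ decay for $0,\pm1$ entries is precisely the content of Maples' Proposition 2.1, which the paper itself imports for the same purpose in its divisor-counting lemma (for fixed $p$ the classical Charlap--Rees--Robbins corank asymptotics would also do). With that input added, your argument also needs one bookkeeping repair: the clean bound $\mathbb{P}(\ell^2\mid\det M)\le C\ell^{-2}$ for composite $\ell$ is not what the conditioning yields --- you get $\prod_{p\mid\ell}p^{-2}$ plus an additive error of order $\sum_{p\mid\ell}p^{-2}$ --- so you must choose $p_1,\dotsc,p_m$ growing with $m$ (say $p_i\ge 4^m$) so that $2^m\sum_i p_i^{-2}\to 0$. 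Both repairs are available, but as written the proposal silently assumes the one nontrivial random-matrix input that cannot be derived from its own elementary lemmas.
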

We direct the reader to our main paper \cite[\S 4]{BK} for motivation
for such a result, and in particular why we are interested in squares
and not in any other sparse subset of the integers.

Throughout the paper we denote by $\xi_{i}$ random independent variables
taking the value 0 with probability $\frac{1}{2}$ and the values
$1$ and $-1$ with probability $\frac{1}{4}$ each.
\begin{lem}
\label{lem:Fourier}For any $a_{1},\dotsc,a_{n}\in\mathbb{Z}$ and
any $x\in\mathbb{Z}$,
\[
\mathbb{P}\Big(\sum_{i=1}^{n}\xi_{i}a_{i}=x\Big)\le\mathbb{P}\Big(\sum_{i=1}^{n}\xi_{i}a_{i}=0\Big).
\]
\end{lem}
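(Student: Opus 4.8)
The plan is to use Fourier analysis --- i.e.\ characteristic functions --- exactly as the name of the lemma suggests. First I would compute the characteristic function of a single $\xi_i$: since $\xi_i$ takes the values $0,\pm1$ with the stated probabilities,
\[
\mathbb{E}\bigl[e^{it\xi_i}\bigr]=\tfrac12+\tfrac14 e^{it}+\tfrac14 e^{-it}=\tfrac{1+\cos t}{2}=\cos^2(t/2),
\]
which is manifestly real and nonnegative. Writing $S=\sum_{i=1}^n a_i\xi_i$ and using independence, the characteristic function of $S$ factorizes as
\[
\phi(t):=\mathbb{E}\bigl[e^{itS}\bigr]=\prod_{i=1}^n\cos^2(a_i t/2)\ge 0.
\]
The crucial point is that this is a product of nonnegative factors, so $\phi(t)\ge 0$ for every real $t$.

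Next, since $S$ is integer-valued, Fourier inversion on the circle gives, for every $x\in\mathbb{Z}$,
\[
\mathbb{P}(S=x)=\frac{1}{2\pi}\int_{-\pi}^{\pi}\phi(t)e^{-itx}\,dt.
\]
I would then bound the integrand by its modulus: using $|e^{-itx}|=1$ and $\phi(t)=|\phi(t)|$ by nonnegativity,
\[
\mathbb{P}(S=x)\le\frac{1}{2\pi}\int_{-\pi}^{\pi}\phi(t)\,dt=\mathbb{P}(S=0),
\]
the final equality being the inversion formula evaluated at $x=0$. This is exactly the claimed inequality.

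The only genuine idea here is the nonnegativity of $\phi$; everything else is routine, so I do not expect a real obstacle. The one thing to verify with care is that each factor $\mathbb{E}[e^{ita_i\xi_i}]=\cos^2(a_i t/2)$ is nonnegative, since this fails for many distributions and is precisely where the symmetry and the specific atom at $0$ of the law of $\xi_i$ are used. A conceptual way to see it --- and a sanity check I would keep in mind --- is that $\xi_i$ has the law of $A-B$ for independent $A,B$ uniform on $\{0,1\}$, whence $\mathbb{E}[e^{it\xi_i}]=\mathbb{E}[e^{itA}]\,\overline{\mathbb{E}[e^{itB}]}=\bigl|\mathbb{E}[e^{itA}]\bigr|^2\ge 0$; this explains structurally why the characteristic function is a perfect square.
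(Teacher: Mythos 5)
Your proof is correct and follows essentially the same route as the paper's: both arguments rest on the observation that the characteristic function of $\sum\xi_i a_i$ factors into the nonnegative terms $\tfrac12(1+\cos(a_i t))=\cos^2(a_i t/2)$, and then apply Fourier inversion together with $|e^{-itx}|=1$. Your version is slightly more carefully written (explicit inversion on $[-\pi,\pi]$, and the structural remark that $\xi_i\sim A-B$ explains the nonnegativity), but there is no substantive difference.
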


\begin{proof}
This follows immediately from the fact that the Fourier transform
$\phi$ of the distribution function is positive since then the right-hand
side is $\int\phi(t)\,dt$ while the right-hand side is 
\[
\int\phi(t)e^{-ixt}\,dt\le\int|\phi(t)|\,dt=\int\phi\,dt.
\]
To see that $\phi$ is positive, note that it is a product of Fourier
transforms of the individual summands, and each one is simply $\frac{1}{2}(1+\cos(a_{i}t))$.
\end{proof}
\begin{lem}
\label{lem:2-isolated}Let $E\subset\{0,\pm1\}^{k}$ which is 2-isolated,
i.e.\ for any $v\ne w\in E$ we have that $v$ and $w$ differ in
at least two coordinates. Then
\[
\mathbb{P}(\xi\in E)\le\frac{1}{k}
\]
where $\xi=(\xi_{i})_{i=1}^{k}$ is our usual random vector.
\end{lem}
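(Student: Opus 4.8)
The plan is to recast the claim as the statement that $k\,\mathbb{P}(\xi\in E)\le 1$ and to prove it by a mass-transport argument on the cube $\{0,\pm1\}^{k}$. First I would record that for any $v\in\{0,\pm1\}^{k}$ one has $\mathbb{P}(\xi=v)=2^{-k-|\supp v|}$, since each zero coordinate contributes a factor $\tfrac12$ and each $\pm1$ coordinate a factor $\tfrac14$. The idea is then to take, for every $v\in E$, a total mass $k\,\mathbb{P}(\xi=v)$ and redistribute it onto the Hamming-neighbours of $v$ (the $2k$ vectors differing from $v$ in exactly one coordinate), in such a way that no vector $w$ ever receives more than its own mass $\mathbb{P}(\xi=w)$. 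Granting this, summing over $E$ gives that $k\,\mathbb{P}(\xi\in E)=\sum_{v\in E}k\,\mathbb{P}(\xi=v)$ equals the total transported mass, which is at most $\sum_{w}\mathbb{P}(\xi=w)=1$, and the lemma follows.

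For the redistribution I would check that each $v$ has enough neighbouring capacity. Writing $s=|\supp v|$, a flip of one of the $k-s$ zero coordinates produces two neighbours of mass $\tfrac12\mathbb{P}(\xi=v)$ each, while a flip of one of the $s$ nonzero coordinates produces one neighbour of mass $2\,\mathbb{P}(\xi=v)$ (setting it to $0$) and one of mass $\mathbb{P}(\xi=v)$ (switching its sign). Hence the neighbours of $v$ carry total mass $(k-s)\mathbb{P}(\xi=v)+3s\,\mathbb{P}(\xi=v)=(k+2s)\,\mathbb{P}(\xi=v)\ge k\,\mathbb{P}(\xi=v)$. I would therefore send to each neighbour $w$ of $v$ exactly the fraction $k/(k+2s)\le 1$ of its own mass: the amount leaving $v$ is then precisely $k\,\mathbb{P}(\xi=v)$, while each individual neighbour is charged at most $\mathbb{P}(\xi=w)$ by $v$.

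The step carrying all the weight, and the one I expect to be the main obstacle, is showing that these charges never accumulate: I need that no vector $w$ is a Hamming-neighbour of two distinct points of $E$, so that each $w$ is charged by at most one source and hence receives at most $\mathbb{P}(\xi=w)$ in total. This disjointness of the radius-one neighbourhoods is exactly where the 2-isolation hypothesis must enter, and pinning it down precisely — that distinct elements of $E$ admit no common neighbour — is the crux of the proof; the capacity bookkeeping above is then routine. With the disjointness in hand, the global inequality $\sum_{w}(\text{mass received})\le\sum_{w}\mathbb{P}(\xi=w)=1$ closes the argument and yields $\mathbb{P}(\xi\in E)\le 1/k$.
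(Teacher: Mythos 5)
Your argument is, modulo the mass--transport packaging, the same as the paper's: there one takes for each $v\in E$ the radius-one ball $\Omega_{v}$, observes $\mathbb{P}(\xi\in\Omega_{v})\ge k\,\mathbb{P}(\xi=v)$ (your capacity count $(k+2s)\,\mathbb{P}(\xi=v)$ with $s=|\supp v|$ is just a more explicit version of this), and sums over the disjoint $\Omega_{v}$. The bookkeeping in your second paragraph is correct. The one step you leave open --- that distinct elements of $E$ admit no common Hamming-neighbour --- is indeed the crux, and your instinct to worry about it is right, because under the hypothesis as literally stated it is \emph{false}: if $v=(0,0,0,\dotsc,0)$ and $v'=(1,1,0,\dotsc,0)$ differ in exactly two coordinates, then $w=(1,0,0,\dotsc,0)$ is a neighbour of both, so the radius-one neighbourhoods of a $2$-isolated set need not be disjoint. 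In fact the lemma itself fails with that reading: identifying $\{0,\pm1\}$ with $\mathbb{F}_{3}$, the set $E=\{v:\sum_{i}v_{i}=0\}$ has all pairwise distances at least $2$, yet $\mathbb{P}(\xi\in E)=\frac{1}{3}+O(4^{-k})$, which exceeds $\frac{1}{k}$ for large $k$.

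What rescues both your proof and the paper's is that the hypothesis actually supplied in the application is stronger: the complement of the event $\mathcal{G}$ excludes pairs $\eta\ne\eta'$ of square-producing vectors differing in one \emph{or two} entries, so the relevant set has pairwise Hamming distance at least $3$. Under that hypothesis the disjointness you need is immediate from the triangle inequality, since a common neighbour $w$ of $v\ne v'$ would give $d(v,v')\le d(v,w)+d(w,v')\le2$. With the definition of $2$-isolated read as ``differ in at least three coordinates'' (which is what the rest of the paper uses), your transport argument closes exactly as you describe and coincides with the paper's proof; as written, the missing disjointness cannot be supplied.
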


\begin{proof}
For every $v\in E$ let $\Omega_{v}\subset\{0,\pm1\}^{k}$ be the
set of all $w$ which differ from $v$ by at most one coordinate.
Then
\[
\mathbb{P}(\xi\in\Omega_{v})\ge k\mathbb{P}(\xi=v)
\]
and since the $\Omega_{v}$ for different $v$ are disjoint,
\[
\mathbb{P}(\xi\in E)=\sum_{v\in E}\mathbb{P}(\xi=v)\le\frac{1}{k}\sum_{v\in E}\mathbb{P}(\xi\in\Omega_{v})=\frac{1}{k}\mathbb{P}(\xi\in\bigcup\Omega_{v})\le\frac{1}{k}.\qedhere
\]
\end{proof}
\begin{lem}
\label{lem:PNT}The sum of $\frac{1}{p}$ over all primes $p$ between
$1$ and $n$ is less than $C\log\log n$.
\end{lem}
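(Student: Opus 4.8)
The plan is to recognise this as a weak form of Mertens' second theorem and to derive the stated upper bound from a Chebyshev-type estimate on the prime-counting function $\pi(t)=\#\{p\le t: p\text{ prime}\}$ by partial summation. Since we only want an upper bound with an unspecified constant $C$, the full Prime Number Theorem is overkill; the elementary Chebyshev bound $\pi(t)\le C't/\log t$, valid for all $t\ge2$, will suffice.

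First I would record Chebyshev's bound, which follows from examining the prime factorisation of the central binomial coefficient $\binom{2m}{m}$ (a standard computation I would simply cite). Then I would write the sum as a Riemann--Stieltjes integral against $d\pi$ and integrate by parts (Abel summation),
\[
\sum_{p\le n}\frac1p=\int_{2^-}^n\frac1t\,d\pi(t)=\frac{\pi(n)}{n}+\int_2^n\frac{\pi(t)}{t^2}\,dt,
\]
where the vanishing of $\pi(t)$ for $t<2$ kills the lower boundary term and the jump of $\pi$ at $t=2$ accounts for the $p=2$ term of the sum.

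Next I would feed the Chebyshev bound into each piece. The boundary term satisfies $\pi(n)/n\le C'/\log n=O(1)$, and the integral obeys
\[
\int_2^n\frac{\pi(t)}{t^2}\,dt\le C'\int_2^n\frac{dt}{t\log t}=C'\bigl(\log\log n-\log\log 2\bigr),
\]
the last equality by the substitution $u=\log t$. Adding the two contributions yields $\sum_{p\le n}1/p\le C\log\log n$ for a suitable $C$ and all large $n$, and then, after possibly enlarging $C$, for all $n\ge2$; this is exactly the claim.

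As for difficulty, there is essentially no genuine obstacle: this is a classical estimate, and because we are content with a crude upper bound rather than the sharp asymptotic $\log\log n+M+o(1)$, even weak inputs are enough. The only ingredient to import from outside the excerpt is the Chebyshev bound $\pi(t)=O(t/\log t)$; alternatively one could skip partial summation altogether and simply cite Mertens' theorem, which gives the result immediately.
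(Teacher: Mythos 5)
Your argument is correct, and it reaches the same bound by a genuinely different (if closely related) route. The paper decomposes the primes into dyadic blocks $[2^{l},2^{l+1}]$, bounds the contribution of each block by $\frac{1}{2^{l}}\bigl(\pi(2^{l+1})-\pi(2^{l})\bigr)\le C/l$ using the prime number theorem, and sums the resulting harmonic series to get $C\log\log n$. You instead run a continuous partial summation against $d\pi$ and feed in only Chebyshev's elementary bound $\pi(t)\le C't/\log t$, arriving at $\int_2^n \frac{dt}{t\log t}=O(\log\log n)$. The two mechanisms are morally the same --- the dyadic decomposition is a discretised Abel summation --- but your input is strictly weaker: Chebyshev's estimate is elementary (via $\binom{2m}{m}$), whereas the paper invokes the full PNT, which is far more than is needed for a one-sided bound with an unspecified constant. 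Your version is therefore the more economical one; the paper's dyadic version is slightly shorter to write down given that it is content to cite the PNT as a black box. Your closing remark that one could simply cite Mertens' second theorem is also correct and is the shortest path of all. (The only cosmetic caveat, shared by the paper's own statement, is that $\log\log n$ is negative or tiny for very small $n$, so the inequality should be read as holding for $n$ large or with the right-hand side adjusted; this has no bearing on the application.)
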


\begin{proof}
This is a simple corollary from the prime number theorem, which states
that there are $(1+o(1))n/\log n$ primes up to $n$. Hence for all
$k$ 
\[
\sum_{k}^{2k}\frac{1}{p}\le\frac{1}{k}\left(\frac{2k}{\log2k}-\frac{k}{\log k}+o\left(\frac{k}{\log k}\right)\right)\le\frac{C}{\log k}.
\]
Summing over $k=2^{l}$ for $l$ from $1$ to $\log n$ gives the
lemma. (in fact, a similar argument shows that this sum is $(1+o(1))\log\log n$,
but we will have no use for this extra precision).
\end{proof}
\begin{proof}
[Proof of the Theorem]The starting point is the result that 
\[
\mathbb{P}(\det M=0)\le2^{-\delta n}
\]
for some constant $\delta>0$ (this result is essentially due to of
Kahn, Komlós and Szemerédi \cite{KKS95}, though formally they only
proved the case that the coefficients are $\pm1$. For a proof for
our $\xi_{i}$ see \cite{BVW10}, which bases on \cite{TV07}. We
remark that \cite{BVW10} calculates the correct value of $\delta$,
but we have no use for this fact). Expanding the determinant by the
first row we get 
\[
\mathbb{P}\Big(\sum_{i=1}^{n}\xi_{i}d_{i}=0\Big)\le2^{-\delta n}
\]
where $d_{i}$ is the determinant of the $(i,1)^{\textrm{st}}$ minor,
i.e.\ the matrix $M$ with its $i^{th}$ column and first row removed
(we suppressed the terms $(-1)^{i+1}$ in the expansion, which we
may because the $\xi_{i}$ are symmetric to taking minus). By lemma
\ref{lem:Fourier}, for any fixed numbers $d_{i}$ and for any $x$,
$\mathbb{P}(\sum\xi_{i}d_{i}=x)\le\mathbb{P}(\sum\xi_{i}d_{i}=0)$.
Integrating over the $d_{i}$ gives
\begin{equation}
\mathbb{P}(\det M=x)\le2^{-\delta n}\qquad\forall x.\label{eq:that took a month???}
\end{equation}
Unfortunately, one cannot simply sum (\ref{eq:that took a month???})
over all squares $x$ in the possible range of values of $\det M$,
there are too many of those. So we have to take a more roundabout
way.

Let $k$ be some parameter to be fixed later. From (\ref{eq:that took a month???})
we get
\[
\mathbb{P}\bigg(\sum_{i=1}^{n-k}\xi_{i}d_{i}=0\bigg)\le2^{k-\delta n}
\]
since if the partial sum is $0$, then there is a probability of $2^{k}$
that all remaining $\xi_{i}$ are zero ($d_{i}$ are still the $(i,1)$
minors of $M$). Let $\mathcal{A}$ be the event that 
\[
\mathbb{P}\bigg(\sum_{i=1}^{n-k}\xi_{i}d_{i}=0\bigg)>2^{-n\delta/2}
\]
where here the $\xi_{i}$ are independent of $M$, so that $\mathcal{A}$
depends only on rows $2,\dotsc,n$ of $M$. By Markov's inequality,
$\mathbb{P}(\mathcal{A})\le2^{k-n\delta/2}$.

Let now $\eta\in\{0,\pm1\}^{n}$ and let $\eta'$ be identical to
$\eta$ except at one entry, say the $j^{\textrm{th}}$. Assume that
\begin{equation}
\sum_{i=1}^{n}\eta_{i}d_{i}=A^{2}\text{ and }\sum_{i=1}^{n}\eta_{i}'d_{i}=B^{2}\label{eq:both squares}
\end{equation}
for integer $A$ and $B$. Then $A^{2}-B^{2}=(A-B)(A+B)$ must be
one of $\{\pm d_{j},\pm2d_{j}\}$. Therefore every divisor of $2d_{j}$
corresponds to at most two solutions for $A$ and $B$ (up to the
signs of $A$ and $B$). Let therefore $\mathcal{B}$ be the event
that for all $j\in\{n-k+1,\dotsc,n\}$ the number of divisors of $2d_{j}$
is at most $e^{\sqrt{n}}$. By lemma \ref{lem:syrup} below and Markov's
inequality, 
\[
\mathbb{P}(\mathcal{B})\le Ck\frac{(\log n)^{2}}{\sqrt{n}}.
\]
Like $\mathcal{A}$, $\mathcal{B}$ is an event that depends only
on rows $2,\dotsc,n$ of $M$. Repeat this argument with $\eta$ and
$\eta'$ which defer in two entries, using the ``further'' clause
of lemma \ref{lem:syrup}. Let $\mathcal{C}$ be the corresponding
bad event, i.e.\ the event that for some $j\ne j'$ and some $\tau_{1},\tau_{2}\in\{\pm1,\pm2\}$
we have that $\tau_{1}d_{j}+\tau_{2}d_{j'}$ has more than $e^{\sqrt{n}}$
divisors. Then the conclusion of lemma \ref{lem:syrup} is that $\mathbb{P}(\mathcal{C})\le Ck^{2}(\log n)^{2}/\sqrt{n}$.

Denote the entries of $M$ by $m_{i,j}$. Let $\mathcal{G}$ be the
event that an $\eta$ and an $\eta'$ exist such that 
\begin{enumerate}
\item \label{enu:eta is m}$\eta_{i}=\eta'_{i}=m_{i,1}$ for all $1\le i\le n-k$.
\item $\eta$ defers from $\eta'$ by either one or two entries
\item For some integer $A$ and $B$, (\ref{eq:both squares}) holds.
\end{enumerate}
Then $\mathcal{G}$ is an event which depends only on $m_{1,1},\dotsc,m_{1,n-k}$
and $m_{i,j}$ for $i\ge2$. We now claim that 
\begin{equation}
\mathbb{P}(\mathcal{G})\le2k^{2}3^{k}2^{-\delta n/2}e^{\sqrt{n}}+\mathbb{P}(\mathcal{A}\cup\mathcal{B}\cup\mathcal{C}).\label{eq:i hate conditioned probability}
\end{equation}
This is just a summary of the previous discussion, but let us do it
in details: if $\mathcal{B}$ did not occur then for all $j\in\{n-k+1,\dotsc,n\}$,
$2d_{j}$ has no more than $e^{\sqrt{n}}$ divisors. In this case
there are only $2e^{\sqrt{n}}$ candidates for $(A,B)$ that satisfy
(\ref{eq:both squares}) for $\eta$ and $\eta'$ different at any
fixed $j$, and summing over the possibilities for $j$ gives a total
of $2ke^{\sqrt{n}}$. For each such candidate $(A,B)$ 
\[
\mathbb{P}\Big(\exists\eta\text{ s.t. }\sum_{i=1}^{n}\eta_{i}d_{i}=A^{2}\Big)\le3^{k}\max_{x\in\mathbb{Z}}\mathbb{P}\Big(\sum_{i=1}^{n-k}m_{i,1}d_{i}=x\Big)
\]
where in the left-hand side $\eta$ is assumed to satisfy assumption
(\ref{enu:eta is m}); and where the factor $3^{k}$ is simply the
number of possibilities for $\{m_{i,1}\}_{i=n-k+1}^{n}$. Applying
Lemma \ref{lem:Fourier} we get that the right-hand side is smaller
or equal to $3^{k}\mathbb{P}(\sum m_{i,1}d_{i}=0)$, and if $\mathcal{A}$
did not occur then this last probability is smaller than $2^{-\delta n/2}$.
We get that for any fixed $A$,
\[
\mathbb{P}\Big(\exists\eta\text{ s.t. }\sum_{i=1}^{n}\eta_{i}d_{i}=A^{2}\Big)\le3^{k}2^{-\delta n/2}.
\]
Summing over the $2ke^{\sqrt{n}}$ candidates finishes the case where
$\eta$ differs from $\eta'$ in just one entry. The case of two entries
is covered similarly by the event $\mathcal{C}$ (with $k$ replaced
by $\binom{k}{2}$ because we sum over two differing coordinates).
This shows (\ref{eq:i hate conditioned probability}).

The last remaining point is that if $\mathcal{G}$ did not occur,
then the probability that $\det M$ is a square is no more than $\frac{1}{k}$,
because the set of values of $m_{1,n-k+1},\dotsc,m_{1,n}$ that gives
a square is a subset of $\{0,\pm1\}^{k}$ which is $2$-isolated,
and we can apply lemma \ref{lem:2-isolated}. We conclude:
\begin{multline*}
\mathbb{P}(\det M\text{ is a square})\\
\le\frac{1}{k}+2k^{2}3^{k}2^{-\delta n/2}e^{\sqrt{n}}+2^{k-\delta n/2}+Ck\frac{(\log n)^{2}}{\sqrt{n}}+Ck^{2}\frac{(\log n)^{2}}{\sqrt{n}}
\end{multline*}
(terms 2-5 being the bound (\ref{eq:i hate conditioned probability})
on $\mathbb{P}(\mathcal{G})$ with $\mathbb{P}(\mathcal{A})$, $\mathbb{P}(\mathcal{B})$
and $\mathbb{P}(\mathcal{C})$ replaced by their bounds, in this order).
Finally we choose $k$, and taking $k=\left\lfloor n^{1/6}\right\rfloor $
gives that the right-hand side is $n^{-1/6+o(1)}$, proving the theorem.
\end{proof}
\begin{lem}
\label{lem:syrup}Let $M$ be as in the theorem. Let $X$ be the number
of divisors of $\det M$. Then
\begin{equation}
\mathbb{E}\log X\le C(\log n)^{2}.\label{eq:syrup}
\end{equation}
Further, if $d_{1}$ and $d_{2}$ are the determinants of two different
$n-1\times n-1$ first row minors of $M$, if $\tau_{1}$ and $\tau_{2}$
are in $\{\pm1,\pm2\}$ and if $Y$ is the number of divisors of $d_{1}\tau_{1}+d_{2}\tau_{2}$,
then again we have $\mathbb{E}\log Y\le C(\log n)^{2}$.
\end{lem}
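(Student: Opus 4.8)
The plan is to reduce the bound to an anticoncentration estimate modulo prime powers and then sum. Write $N=\det M$ (the argument for $Y=\tau_1 d_1+\tau_2 d_2$ is identical and is addressed at the end). By Hadamard's inequality each row of $M$ has Euclidean norm at most $\sqrt n$, so $|N|\le n^{n/2}=:B$ and $\log|N|\le\frac12 n\log n$ whenever $N\ne0$. The event $N=0$ has probability at most $2^{-\delta n}$ by (\ref{eq:that took a month???}); on it I set $\log X$ equal to the deterministic cap $\frac12 n\log n$ (which dominates $\log X$ on $\{N\ne0\}$ since $X\le 2\sqrt{|N|}$), so that this event contributes at most $2^{-\delta n}\cdot\frac12 n\log n=o(1)$ and may be ignored. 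For $N\ne0$, writing $v_p$ for the $p$-adic valuation of $N$, I use $\log(v+1)\le\sum_{i=1}^v\frac1i$ to get $\log X=\sum_p\log(v_p+1)\le\sum_p\sum_{i=1}^{v_p}\frac1i=\sum_{j\ge1}\frac1j\,\#\{p:p^j\mid N\}$.

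Next I would split the primes at a threshold $Q=2^{cn}$, where $c>0$ is a small constant to be chosen within reach of the equidistribution estimate below. Call a prime large if $p^{v_p}>Q$ and small otherwise. The large primes are handled deterministically: since $\sum_{p\text{ large}}v_p\log p\le\log|N|\le\frac12 n\log n$, there are at most $(\frac12 n\log n)/\log Q=O(\log n)$ of them, and each contributes $\log(v_p+1)\le\log\bigl(1+\tfrac{\log B}{\log 2}\bigr)=O(\log n)$, for a total of $O((\log n)^2)$. This deterministic tail is precisely the source of the square in the stated bound. For the small primes I apply $\log(v_p+1)\le\sum_{i=1}^{v_p}\frac1i$ as above, noting that every power $p^i$ that appears satisfies $p^i\le p^{v_p}\le Q$; taking expectations gives $\mathbb E\bigl[\sum_{p\text{ small}}\log(v_p+1)\bigr]\le\sum_{j\ge1}\frac1j\sum_{p^j\le Q}\mathbb P(p^j\mid N)$.

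To finish I need the key estimate
\begin{equation}
\mathbb P(p^j\mid N)\le\frac{C}{p^j}\qquad\text{for all }p^j\le Q.\label{eq:equidist}
\end{equation}
Granting (\ref{eq:equidist}), the inner sum over $j$ is at most $C\sum_{j\ge1}(jp^j)^{-1}=-C\log(1-1/p)\le 2C/p$, and Lemma \ref{lem:PNT} (whose dyadic proof extends verbatim to the range up to $Q$) gives $\sum_{p\le Q}1/p=O(\log\log Q)=O(\log n)$. Altogether $\mathbb E\log X\le O((\log n)^2)+O(\log n)=O((\log n)^2)$, as required.

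The hard part is (\ref{eq:equidist}). I would prove it by conditioning on rows $2,\dots,n$ of $M$ and expanding along the first row, so that $N=\sum_i\xi_i d_i$ with the minors $d_i$ now fixed. As in Lemma \ref{lem:Fourier}, $\mathbb P(\sum_i\xi_i d_i\equiv0\bmod m)=\frac1m\sum_{t=0}^{m-1}\prod_{i=1}^n\frac12\bigl(1+\cos(2\pi t d_i/m)\bigr)$ for $m=p^j$; the term $t=0$ yields $1/m$, and the task is to show that the remaining terms contribute $O(1/m)$ for all but an exponentially unlikely set of lower rows. This reduces to a non-degeneracy statement modulo $p$ for the random $(n-1)\times n$ matrix formed by rows $2,\dots,n$: with overwhelming probability a linear (in $n$) number of the cofactors $d_i$ are units modulo $p$ and are spread out enough that no nonzero frequency $t$ makes the product large. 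This is exactly the type of singularity/anticoncentration bound established by the methods of Kahn, Koml\'os and Szemer\'edi \cite{KKS95} and Bourgain, Vu and Wood \cite{BVW10}, now run modulo $p^j$ for $p^j\le 2^{cn}$; the exponential cutoff $Q$ is what keeps this in range, and everything above $Q$ was already absorbed by the deterministic tail. Finally, for $Y=\tau_1 d_1+\tau_2 d_2$ one notes that, by multilinearity of the determinant in the first row, $Y=\det M'$ where $M'$ is $M$ with its first row replaced by the fixed integer vector $\pm\tau_1 e_{i_1}\pm\tau_2 e_{i_2}$; expanding $\det M'$ instead along a genuinely random row reduces $Y$ to the same situation, and the identical argument yields $\mathbb E\log Y=O((\log n)^2)$.
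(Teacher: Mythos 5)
Your bookkeeping (the Hadamard bound, the split into large and small prime powers, the deterministic $O((\log n)^2)$ contribution of the large ones, and the reduction of the small ones to $\sum_j j^{-1}\sum_{p^j\le Q}\mathbb{P}(p^j\mid N)$ via Lemma \ref{lem:PNT}) is all correct, but the proof has a genuine gap at exactly the point you flag as ``the hard part'': the estimate $\mathbb{P}(p^j\mid\det M)\le C/p^j$, uniformly over $p^j\le 2^{cn}$, is asserted rather than proved. For $j=1$ this is precisely Theorem 1.1 of Maples \cite{M}, which is the black box the paper invokes; for $j\ge2$ it is a strictly stronger statement about the distribution of $\det M$ modulo prime \emph{powers} (equivalently, about coranks over $\mathbb{Z}/p^j$), and it does not follow from the $j=1$ case nor from the singularity bounds of \cite{KKS95,BVW10}, which concern $\det M=0$ over $\mathbb{Z}$ and not equidistribution modulo $p^j$. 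Your Fourier reduction to ``the cofactors are spread out mod $p^j$ with overwhelming probability'' is a plausible programme but is itself the entire content of a Maples-type theorem, so as written the crux is missing. The gap is avoidable: the paper sidesteps prime powers entirely by writing $\log X=\sum_p\log(v_p+1)\le(\log\log n!)\cdot\#\{p:p\mid\det M\}$, paying a harmless factor $O(\log n)$ so that only the $j=1$ divisibility probability is ever needed; adopting that step would let you close the first clause using \cite{M} as quoted.

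The second clause has the same problem in a sharper form. Your identity $\tau_1d_1+\tau_2d_2=\det M'$ with $M'$ having a \emph{deterministic} sparse first row is correct, but $M'$ is then no longer a matrix with i.i.d.\ entries, so ``the identical argument'' does not apply off the shelf: one must re-establish equidistribution mod $p$ of the determinant of this modified matrix. This is exactly why the paper's proof of the second clause goes inside Maples' argument, conditioning on the codimension of the span $W_3$ of the trailing columns and on the equidistribution event for the second-order cofactors $w_j$, rather than citing the $j=1$ result as a black box. Some argument of that kind (or an explicit verification that Maples' proof tolerates one fixed row) is needed and is absent from your write-up.
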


\begin{proof}
The first clause is a simple corollary of Maples \cite{M}. Indeed,
theorem 1.1 of \cite{M} gives, for every prime $p$,
\begin{equation}
\mathbb{P}(p|\det M)=1-\prod_{k=1}^{\infty}(1-p^{-k})+O(e^{-\epsilon n})\label{eq:Maples}
\end{equation}
where $\epsilon>0$ is some absolute constant (the implied constant
in $O$ is also absolute, i.e.\ does not depend on $p$ or $n$).
Denote by $k(p)$ the number of times $p$ divides $\det M$ (plus
1) i.e.\ the $k$ such that $p^{k-1}|\det M$ but $p^{k}\not|\det M$.
Then
\[
\log X=\sum_{p}\log k(p)
\]
Since $|\det M|\le n!$ we have $k(p)\le C\log(n!)$, so
\[
\log X\le C(\log\log n!)|\{p\text{ prime}:p|\det M\}|.
\]
For $p<e^{\epsilon n/2}$ we use (\ref{eq:Maples}) and get 
\begin{multline*}
\mathbb{E}|\{p\text{ prime}:p\le e^{\epsilon n/2},p|\det M\}|\\
\le\sum_{p\le e^{\epsilon n/2}}\frac{C}{p}+O(e^{-\epsilon n})\le C\log\log e^{\epsilon n/2}+O(e^{-\epsilon n/2})
\end{multline*}
where the last inequality follows from lemma \ref{lem:PNT}. For $p\ge e^{\epsilon n/2}$
we note that the number of such $p$ that divide $\det M$ is no more
than 
\[
\frac{\log n!}{\log e^{\epsilon n/2}}\le C\log n.
\]
($C$ here depends on $\epsilon$, but $\epsilon$ is an absolute
constant anyway). All in all we get 
\[
\mathbb{E}|\{p\text{ prime}:p|\det M\}|\le C\log n
\]
which proves (\ref{eq:syrup}).

For the second clause (sum of determinants of two minors) we need
to examine the proof of \cite{M} a little. Assume for concreteness
that $d_{i}$ is the determinant of the $(i,1)$ minor of $M$ for
$i=1,2$. Following \cite{M}, denote by $W_{k}$ the span (over the
finite field $\mathbb{F}_{p}$) of columns $k+1,\dotsc,n$ in the
matrix $M$ without its first row (so that $W_{k}\subset\mathbb{F}_{p}^{n-1}$).
By \cite[proposition 2.1]{M} 
\begin{equation}
\mathbb{P}(\codim W_{3}\ge2)\le\frac{C}{p^{2}}+Ce^{-cn}.\label{eq:codim}
\end{equation}
(in this case, of course, $p$ would divide both $d_{1}$ and $d_{2}$
and hence also $d_{1}\tau_{1}+d_{2}\tau_{2}$). 

In the case where $\codim W_{3}=1$, we argue as follows. Let $w_{j}$
for $j=2,\dotsc,n$ be the determinant of the $n-2\times n-2$ minor
of $M$ one gets by removing the first and $j^{\textrm{th}}$ rows;
and the first two columns. Let $\eta>0$ be some parameter. Let $\mathcal{A}$
be the event that 
\begin{equation}
\bigg|\mathbb{P}\bigg(\sum_{j=2}^{n}(-1)^{j}\xi_{j}w_{j}=x\bigg)-\frac{1}{p}\bigg|\le e^{-\eta n}\quad\forall x\in\mathbb{F}_{p}.\label{eq:defA}
\end{equation}
Notice that $\mathcal{A}$ depends only the entries of $M$ different
from the first row and the first two columns (the $\xi_{j}$ in (\ref{eq:defA})
are assumed to be independent of $M$). By \cite[\S 4]{M}, for an
appropriate choice of $\eta$, independent of $p$ or $n$, 
\begin{equation}
\mathbb{P}\left(\mathcal{A}\,|\,\codim W_{3}=1\right)>1-e^{-cn}.\label{eq:Maples4}
\end{equation}
We are now finished, since $d_{i}=\sum_{j=2}^{n}(-1)^{j}m_{i,j}w_{j}$
for both $i=1,2$ and the $m_{i,j}$ are independent, so under the
event $\mathcal{A}$, for any value of $d_{1}$, the probability that
$d_{2}$ takes the value $-d_{1}\tau_{1}/\tau_{2}$ (in $\mathbb{F}_{p}$,
let us assume for a moment $p>2$) is $\frac{1}{p}+O(e^{-cn})$. With
(\ref{eq:Maples4}) we get
\[
\mathbb{P}\big(p|\tau_{1}d_{1}+\tau_{2}d_{2}\,\big|\,\codim W_{3}=1\big)=\frac{1}{p}+O(e^{-cn}).
\]
Throwing (\ref{eq:codim}) into the mix gives
\[
\mathbb{P}(p|\tau_{1}d_{1}+\tau_{2}d_{2})\le\frac{C}{p}+Ce^{-cn}.
\]
This last inequality holds also for $p=2$, of course, as it become
trivial for $C$ sufficiently large. The proof then continues as in
the single matrix case.
\end{proof}

\subsection*{Acknowledgements}

We thank Igor Rivin for enjoyable discussions and for the reference
to \cite{M}. LBS was supported by the Israel Science Foundation.
GK was supported by the Israel Science Foundation and by the Jesselson
Foundation.


\begin{thebibliography}{1}
\bibitem{BK}Lior Bary-Soroker and Gady Kozma, Irreducible polynomials
with bounded height. Preprint (2017), available at: \href{https://arxiv.org/abs/1710.05165}{\nolinkurl{arXiv:1710.05165}}

\bibitem{BVW10}Jean Bourgain, Van Vu and Philip Matchett Wood, \emph{On
the singularity probability of discrete random matrices}. J. Funct.
Anal. 258:2 (2010), 559\textendash 603. Available from: \afs{https://doi.org/10.1016/j.jfa.2009.04.016}{sciencedirect.com/9001955}

\bibitem{KKS95}Jeff Kahn, János Komlós and Endre Szemerédi, \emph{On
the probability that a random $\pm1$-matrix is singular}. J. Amer.
Math. Soc. 8:1 (1995), 223\textendash 240. \href{http://www.jstor.org/stable/2152887 }{\nolinkurl{jstor.org/2152887}}

\bibitem{M}Kenneth Maples, \emph{Singularity of Random Matrices over
Finite Fields}. Preprint (2010), available at: \href{https://arxiv.org/abs/1012.2372}{\nolinkurl{arXiv:1012.2372}}

\bibitem{TV07}Terence Tao and Van Vu, \emph{On the singularity probability
of random Bernoulli matrices}. J. Amer. Math. Soc. 20:3 (2007), 603\textendash 628.
Available at: \href{https://doi.org/10.1090/S0894-0347-07-00555-3}{\nolinkurl{ams.org/00555-3}}
\end{thebibliography}
\end{document}